
\documentclass[preprint,12pt]{elsarticle}

\def\OO{{\rm O}}
\def\UU{{\rm U}}
\def\T{{\rm T}}
\def\H{{\rm H}}
\def\R{{\mathbb R}}
\def\CC{{\mathbb C}}
\def\rank{{\rm rank}}
\def\diag{{\rm diag}}
\newtheorem{theorem}{Theorem}

\newtheorem{lemma}{Lemma}




\usepackage{amssymb}



\begin{document}

\begin{frontmatter}



\title{A note on the hyperbolic singular value decomposition without hyperexchange matrices}


\author[mymainaddress,mysecondaryaddress]{D. S. Shirokov}
\ead{dm.shirokov@gmail.com}

\address[mymainaddress]{National Research University Higher School of Economics, 101000 Moscow, Russia}
\address[mysecondaryaddress]{Institute for Information Transmission Problems of Russian Academy of Sciences, 127051 Moscow, Russia}

\begin{abstract}
We present a new formulation of the hyperbolic singular value decomposition (HSVD) for an arbitrary complex (or real) matrix without hyperexchange matrices and redundant invariant parameters. In our formulation, we use only the concept of pseudo-unitary (or pseudo-orthogonal) matrices. We show that computing the HSVD in the general case is reduced to calculation of eigenvalues, eigenvectors, and generalized eigenvectors of some auxiliary matrices. The new formulation is more natural and useful for some applications. It naturally includes the ordinary singular value decomposition.
\end{abstract}

\begin{keyword}
hyperbolic singular value decomposition \sep SVD \sep hyperexchange matrices \sep pseudo-unitary group \sep pseudo-orthogonal group

\MSC[2010] 15A18\sep  15A23\sep 65F15\sep 65F30



\end{keyword}

\end{frontmatter}


\section{Introduction}
\label{sec:1}

This paper contains two new results. The first result is the presentation of a new formulation of the HSVD for an arbitrary complex (or real) matrix (see Theorems \ref{thNew} or \ref{thNew2}) without hyperexchange matrices and redundant invariant parameters. We use only the concept of pseudo-unitary (or pseudo-orthogonal) matrices. In the standard formulation of the HSVD (see Theorem \ref{thZha}), the matrix $V$ is hyperexchange with five parameters $j$, $l$, $t$, $k$, and $s$, some of which, as it turns out in this paper, are redundant. We obtain a new formulation of the HSVD (with three parameters $j$, $l$, and $t$) without the use of hyperexchange matrices for the general case. The second result is the presentation of the relation between the HSVD and the generalized eigenvalue problem. Namely, we prove that computing the HSVD in the general case is reduced to calculation of eigenvalues, eigenvectors, and generalized eigenvectors of some auxiliary matrices (see Theorem \ref{lem1}).

The paper is organized as follows. In section \ref{sec:2}, we present the well-known formulation of the HSVD with some remarks. In section \ref{sec:3}, we discuss that replacing hyperexchange matrices by corresponding pseudo-unitary (or pseudo-orthogonal) matrices in the standard formulation of the HSVD is not correct for the general case of an arbitrary  complex (or real) matrix. However, this is correct in the particular case of full column rank matrices. In section \ref{sec:4}, we present a new formulation of the HSVD without hyperexchange matrices and redundant invariant parameters in the general case. In section \ref{sec:5}, we discuss relation between the HSVD and the generalized eigenvalue problem. Also we show that the new formulation of the HSVD naturally includes the ordinary singular value decomposition (SVD). The conclusions follow in section \ref{sec:6}.

\section{On the standard formulation of the HSVD with some remarks}
\label{sec:2}

We denote the identity matrix of size $n$ by $I=I_n=\diag(1, \ldots, 1)$ and the diagonal matrix with $+1$ appearing $p$ times followed by $-1$ appearing $q$ times on the diagonal by $J=J_{m}=\diag(I_p, -I_q)$, $p+q=m$. In the current paper, we give all statements for the complex case. All statements will be correct if we replace complex matrices by the corresponding real matrices, the operation of Hermitian conjugation ${}^\H$ by the operation of transpose ${}^\T$, the following unitary-like groups ($m=p+q$)
\begin{eqnarray}
\UU(n)=\{A\in\CC^{n\times n}, A^\H A=I\},\quad\UU(p,q)=\{A\in\CC^{m\times m}, A^\H J A=J\}\label{U}
\end{eqnarray}
by the corresponding orthogonal-like groups
\begin{eqnarray*}
\OO(n)=\{A\in\R^{n\times n}, A^\T A=I\},\quad\OO(p,q)=\{A\in\R^{m\times m}, A^\T J A=J\}.
\end{eqnarray*}
One calls the group $\OO(p,q)$ a pseudo-orthogonal group, an indefinite orthogonal group, or a group of $J$-orthogonal matrices \cite{Hall}, \cite{Higham}. There are also various names of the group $\UU(p,q)$: a pseudo-unitary group, an indefinite unitary group, a group of $J$-unitary matrices, a group of hypernormal matrices \cite{Bojan2}.

The most general version of the hyperbolic singular value decomposition (HSVD) is given in \cite{Zha} by H.~Zha.

\begin{theorem}[\cite{Zha}]\label{thZha} Assume $J=\diag(I_p, -I_q)$, $p+q=m$. For an arbitrary matrix\footnote{Note that in Theorems \ref{thZha}, \ref{thNew}, and Lemma \ref{rem5}, we use the same notation for the dimension $n\times m$ (not more standard $m\times n$) of a rectangular matrix as in Zha's work.
} $A\in\CC^{n\times m}$, there exist matrices $U\in\UU(n)$ and $V\in\CC^{m\times m}$,
\begin{eqnarray}
V^\H J V=\hat{J}:=\diag(-I_j, I_j, -I_t, I_{l-t}, I_s, -I_{k-s}),\label{ks}
\end{eqnarray}
such that
\begin{eqnarray}
A=U\Sigma  V^\H,\qquad
\Sigma =\left(
      \begin{array}{cccc}
        I_j & I_j & 0 & 0 \\
        0 & 0 & D_{l} & 0 \\
        0 & 0 & 0 & 0 \\
      \end{array}
    \right)\in\R^{n\times m},\label{hsvd}
\end{eqnarray}
where $D_l\in\R^{l \times l}$ is a diagonal matrix with all positive diagonal elements, which are uniquely determined. Here we have
$$j=\rank (A)-\rank (AJA^\H),\qquad l=\rank (AJA^\H),$$
$t$ is the number of negative eigenvalues of the matrix $AJA^\H$.
\end{theorem}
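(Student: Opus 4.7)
The plan is to pivot on the Hermitian matrix $M := AJA^\H$ (Hermitian because $J^\H = J$). If $A = U \Sigma V^\H$ with $V^\H J V = \hat{J}$, then $J = (V^{-1})^\H \hat{J} V^{-1}$, so $M = U(\Sigma \hat{J} \Sigma^\H) U^\H$, and a direct block multiplication gives
\begin{equation*}
\Sigma \hat{J} \Sigma^\H = \diag\bigl(0_j,\; D_l\, \diag(-I_t, I_{l-t})\, D_l,\; 0_{n-j-l}\bigr).
\end{equation*}
Thus $U$ must be a unitary eigenbasis of $M$ and the positive diagonal entries of $D_l$ are the square roots of the absolute values of the nonzero eigenvalues of $M$. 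This reverse-engineering explains $l = \rank(M)$, $t$ = number of negative eigenvalues of $M$, and $j = \rank(A) - l$; the remaining parameters $k = m - 2j - l$ and $s = p - j - l + t$ are then forced by Sylvester's law of inertia applied to the congruence $V^\H J V = \hat{J}$.

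Step one is to apply the spectral theorem to $M$ to obtain $U \in \UU(n)$ and positive diagonal $D_l$ realizing the above pattern. Within the freedom of the spectral decomposition, I would choose the first $j$ zero-eigenvalue columns of $U$ from the orthogonal complement of $\ker(A^\H)$ inside $\ker(M)$ (a space of dimension exactly $j$), and the last $n - j - l$ zero-eigenvalue columns from $\ker(A^\H)$ itself. Setting $\tilde{A} := U^\H A$ and partitioning its rows into blocks of sizes $j, l, n - j - l$, this choice makes $\tilde{A}_3 = 0$ automatically, while the identity $\tilde{A} J \tilde{A}^\H = U^\H M U$ delivers $\tilde{A}_1 J \tilde{A}_1^\H = 0$, $\tilde{A}_1 J \tilde{A}_2^\H = 0$, and $\tilde{A}_2 J \tilde{A}_2^\H = D_l \diag(-I_t, I_{l-t}) D_l$.

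Step two is to solve $\Sigma V^\H = \tilde{A}$ for $V$ under the constraint $V^\H J V = \hat{J}$. Partitioning $V^\H$ into row blocks of sizes $j, j, l, k$ matching the column blocks of $\Sigma$, the equation reads $V_1^\H + V_2^\H = \tilde{A}_1$ and $V_3^\H = D_l^{-1} \tilde{A}_2$, with $V_4^\H$ unconstrained by the linear part. The middle block is thus uniquely determined, and $V_3^\H J V_3 = \diag(-I_t, I_{l-t})$ then follows automatically. Since the rows of $\tilde{A}_1$ are $J$-isotropic and $J$-orthogonal to the rows of $\tilde{A}_2$, each such row can be split across a hyperbolic pair inside the $J$-orthogonal complement of the span of $\tilde{A}_2^\H$, yielding valid $V_1, V_2$. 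The remaining $k$ columns $V_4$ are chosen as a $J$-orthogonal completion with signature $(s, k - s)$.

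The main obstacle is this final assembly, which simultaneously imposes the quadratic signature condition $V^\H J V = \hat{J}$ together with the linear condition $\Sigma V^\H = \tilde{A}$. The existence of a hyperbolic companion for each isotropic row of $\tilde{A}_1$, and of a complementary subspace carrying exactly the signature $(s, k - s)$, both rest on Witt's extension theorem for non-degenerate indefinite Hermitian forms; its applicability is guaranteed precisely by Sylvester's law of inertia applied to the data $(p, q, j, l, t)$. Made constructive, this step amounts to a hyperbolic Gram--Schmidt procedure in $(\CC^m, J)$, with hyperbolic rotations replacing ordinary orthogonalisation whenever a naive step would produce a vector of the wrong $J$-sign.
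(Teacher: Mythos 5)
The paper does not supply its own proof of this theorem: it is stated as a citation to Zha \cite{Zha}, and the only in-paper material that touches the machinery you use is Theorem~\ref{lem1} in Section~\ref{sec:5}, where the author observes (after the fact) that the HSVD of $B$ is encoded in the spectral decompositions of $B^\H J B$ and $JBB^\H$. Your proposal essentially runs that observation in reverse and turns it into an existence proof via the spectral theorem plus a Witt-type extension, which is a legitimate and standard route (closer in spirit to the argument of Levy \cite{Levi} than to Zha's original reduction), so I will assess it on its own merits.

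The skeleton is correct. The computation $\Sigma\hat J\Sigma^\H=\diag\bigl(0_j,\,D_l\diag(-I_t,I_{l-t})D_l,\,0_{n-j-l}\bigr)$ is right, the dimension count $\dim\bigl(\ker(M)\ominus\ker(A^\H)\bigr)=\rank(A)-\rank(M)=j$ is right, the resulting block structure $\tilde A_1J\tilde A_1^\H=0$, $\tilde A_1J\tilde A_2^\H=0$, $\tilde A_2J\tilde A_2^\H=D_l\diag(-I_t,I_{l-t})D_l$ with $\tilde A_3=0$ is right, and the observation that $V_3^\H=D_l^{-1}\tilde A_2$ automatically satisfies $V_3^\H J V_3=\diag(-I_t,I_{l-t})$ is right. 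You also correctly locate where the real work lies: building $V_1,V_2,V_4$ inside the $J$-orthogonal complement $W$ of $\operatorname{span}(\tilde A_2^\H)$, which is non-degenerate of signature $(p-(l-t),\,q-t)$, via Witt's extension theorem. Two places where the wording is looser than the mathematics requires and should be tightened if this were to be written out in full: (i)~you need $\rank\tilde A_1=j$ (it holds, because a dependence among the rows would place a nonzero vector of $U_1$ inside $\ker A^\H$, contradicting orthogonality), and you should say so explicitly, since it is what makes $\operatorname{span}(\tilde A_1^\H)$ a $j$-dimensional totally isotropic subspace of $W$; (ii)~the phrase ``each such row can be split across a hyperbolic pair'' suggests a row-by-row construction, but the pairs $(v_i,w_i)$ must be built \emph{simultaneously}: setting $b_i=w_i-v_i$, you need the $b_i$ to span a totally isotropic subspace of $W$ dual to $\operatorname{span}(a_i)$ with $\langle a_i,b_k\rangle_J=2\delta_{ik}$, which is exactly the Witt/hyperbolic-basis statement you cite, so invoke it globally rather than per row. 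Finally, note that the inequalities $s\ge 0$ and $k-s\ge 0$ needed for $\hat J$ to make sense are not separate hypotheses but consequences of the same Witt-index argument: a $j$-dimensional totally isotropic subspace of $W$ forces $j\le\min(p-(l-t),\,q-t)$. With these points made explicit the argument closes cleanly.
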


\noindent{\bf Remark 1.} \label{rem1}  Note that the statement of Theorem \ref{thZha} contains parameters $j$, $l$, $t$, $k$, and $s$. Prof. H. Zha in his work \cite{Zha} (see Remark 6) says that there are four important HSVD parameters $j$, $l$, $k$, $s$ and does not concretize who $k$ and $s$ are in (\ref{ks}). In our opinion, it is more correct to say about three (not four) invariants $j$, $l$, and $t$ (or, alternatively, $j$, $l$, and $s$), which we mention in Theorem \ref{thZha}. The numbers $k$ and $s$ are uniquely determined by $j$, $l$, and $t$:
\begin{eqnarray}
&&k=m-2j-l=m -2\rank (A)+\rank(A JA^\H),\label{kss1}\\
&&s=p-j-l+t=p-\rank (A)+t.\label{kss2}
\end{eqnarray}
Because of the law of inertia the number $p$ of $+1$ and the number $q$ of $-1$ in the matrices $J$ and $\hat{J}$ are the same. Using $j+l-t+s=p$, we get (\ref{kss2}). For determining $k$, we have $2j+l+k=m$ and obtain (\ref{kss1}).

Later we will see that a new formulation of the HSVD (Theorems \ref{thNew} and \ref{thNew2}) does not contain parameters $k$ and $s$. Thus there are three important HSVD parameters: $j$, $l$, and $t$, which depend on $A$ and $J$. The numbers $j$, $l$ and $t$ with the diagonal elements of the matrix $D$ uniquely determine the HSVD for fixed $p$, $m$, and $n$. At the same time, the matrices $U$ and $V$ are not uniquely determined in the HSVD.

\bigskip

Positive numbers on the diagonal of the matrix $D_l$ (the number of them equals $l$) and zeros on the continuation of this diagonal in the matrix $\Sigma$ (the number of such zeros equals $\min(m-2j, n-j)-l$) are called \emph{hyperbolic singular values}. Thus the number of hyperbolic singular values equals $\min(m-2j, n-j)$ in the general case.

The first formulation of the HSVD was presented by R.~Onn, A.~O.~Steinhardt and A.~W.~Bojanczyk in \cite{Bojan0} for the particular case $m\geq n$, $\rank (A J A^\H)=\rank (A)=n$ (the notation as in Theorem \ref{thZha}). In this particular case, $j=0$ and the matrix $\Sigma$ is diagonal with all positive diagonal elements. In \cite{Bojan}, the same three authors formulate the statement for a slightly more general case of arbitrary $m$ and $n$, $\rank (AJA^\H)=\rank (A)=\min(m,n)$. In the third work of the same authors \cite{Bojan2}, there is a generalization of the HSVD to the case $\rank(AJA^\H)<\rank (A)$. This generalization uses complex entries of the matrix $\Sigma$. H.~Zha \cite{Zha} indicated that this generalization seems rather unnatural and presented another generalization using only real entries of the matrix $\Sigma$. We discuss this generalization above (see Theorem \ref{thZha}). B.~C.~Levy \cite{Levi} presented the statement of Zha's result in another form using another proof. At the same time, Levy's statement is weaker than Zha's statement: there are additional arbitrary diagonal matrices instead of the identity matrices $I_j$ in the matrix $\Sigma$; there is no explicit form of the matrix $\hat{J}$ (like (\ref{ks}) in Theorem \ref{thZha}); only the case $m\geq n$ is considered. Note interesting results of S.~Hassi \cite{Hassi}, B.~N. Parlett \cite{Par}, and V.~\v{S}ego \cite{Sego}, \cite{Sego2} on other generalizations of SVD to the hyperbolic case.
In this paper, we give a generalization of Theorem \ref{thZha} without using matrices of type (\ref{ks}), which are called hyperexchange matrices.

\section{On hyperexchange matrices and the HSVD}
\label{sec:3}

In \cite{Bojan0}, a complex matrix $A$ with the condition
\begin{eqnarray}
A^\H J A=\hat{J},\label{hyper}
\end{eqnarray}
where $J=J_{m}=\diag(I_p, -I_q)$, $p+q=m$, and $\hat{J}=\hat{J}_m$ is a diagonal matrix with entries $\pm 1$ in some order, is called a hyperexchange matrix\footnote{One can find another definition of a hyperexchange matrix: $A J A^\H=\hat{J}$ (see \cite{Levi}). The second definition is not equivalent to the first one (\ref{hyper}). Multiplying both sides of (\ref{hyper}) on the left by $A \hat{J}$, and on the right by $A^{-1}J$, we get $A\hat{J} A^\H=J$, which differs from the second definition. Note that the matrix $B=A^{-1}$ satisfies $B J B^\H=\hat{J}$.}. In the particular case $J=\hat{J}$, $A$ becomes a $J$-unitary matrix $A^\H J A=J$ (or, equivalently, $AJA^\H=J$).
$J$-unitary matrices are more natural and useful for different applications. In the next section, we give a generalization of Theorem \ref{thZha} using only $J$-unitary matrices, without using hyperexchange matrices. Let us note the following fact.

\begin{lemma}\label{rem5} If we replace $V^\H J V= \hat{J}$ by $V^\H J V= J$ in Theorem \ref{thZha}, then the statement of Theorem \ref{thZha} will not be correct in the general case. In other words, we can not change the condition for matrix $V$ from hyperexchange to $J$-unitary in the formulation of Theorem \ref{thZha} in the general case.
\end{lemma}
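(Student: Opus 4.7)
The plan is to refute the modified statement with a small explicit counterexample; since the claim is asserted for arbitrary $A$, a single $A$ admitting no decomposition $A=U\Sigma V^\H$ with $U\in\UU(n)$, $V\in\UU(p,q)$, and $\Sigma$ of the form (\ref{hsvd}) suffices. I would take $n=1$, $m=2$, $p=q=1$, $J=\diag(1,-1)$, and $A=(1,\ 2)\in\R^{1\times 2}$. Then $AJA^\H=1-4=-3$, so $\rank(A)=\rank(AJA^\H)=1$ forces $j=0$, $l=1$ in Theorem \ref{thZha}, and the single eigenvalue of $AJA^\H$ being negative forces $t=1$; uniqueness of the positive diagonal of $D_l$ pins $\Sigma=(\sqrt{3},\ 0)$. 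One also computes $\hat{J}=\diag(-1,1)\ne J$, which already signals that a genuine reordering of signature is at stake.

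Now I would suppose, for contradiction, that $A=u\Sigma V^\H$ for some $u\in\UU(1)$ and $V\in\UU(1,1)$. Writing the equality component-wise gives $1=u\sqrt{3}\,\overline{V_{11}}$ and $2=u\sqrt{3}\,\overline{V_{21}}$, whence $|V_{11}|^2=1/3$ and $|V_{21}|^2=4/3$. Consequently the $(1,1)$-entry of $V^\H J V$ equals $|V_{11}|^2-|V_{21}|^2=-1$, contradicting $V^\H J V=J$. Hence no $J$-unitary $V$ can realize the decomposition, proving the lemma.

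The only mildly delicate step is arguing that $\Sigma$ is really rigid; this is immediate from the determination of $j$, $l$, and $t$ in terms of $\rank(A)$, $\rank(AJA^\H)$, and the inertia of $AJA^\H$, together with the uniqueness of the diagonal entries of $D_l$ stated in Theorem \ref{thZha}. For transparency I would also exhibit a hyperexchange $V$ that \emph{does} work, for example $V=\begin{pmatrix}1/\sqrt{3}&2/\sqrt{3}\\ 2/\sqrt{3}&1/\sqrt{3}\end{pmatrix}$, which directly verifies $V^\H J V=\hat{J}$ and $\Sigma V^\H=A$, showing that the obstruction is truly specific to the $J$-unitarity restriction rather than to $A$ itself.
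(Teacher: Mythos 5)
Your proof is correct and takes essentially the same route as the paper: a small explicit counterexample with $n=1$, $m=2$, $p=q=1$, $J=\diag(1,-1)$, for which the block structure of $\Sigma$ forced by $j=0$, $l=1$, $t=1$ is $\Sigma=(d,\,0)$ with $d>0$, and the component-wise equations then show the first column of any candidate $V$ cannot satisfy the $J$-unitarity constraint. The paper uses $A=(0,\,1)$ and reaches the contradiction via $v_{11}=0$ versus $v_{11}^2=1+v_{21}^2\geq 1$, whereas you use $A=(1,\,2)$ and reach it via $(V^\H J V)_{11}=|V_{11}|^2-|V_{21}|^2<0\neq 1$; these are the same obstruction stated two ways. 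One small stylistic point: pinning $d=\sqrt{3}$ by appealing to uniqueness in Zha's theorem is an unnecessary detour — the contradiction $|V_{11}|^2-|V_{21}|^2=-3/d^2<0$ holds for every $d>0$, so you need only the forced block shape of $\Sigma$, not the specific value. Your closing exhibition of a working hyperexchange $V$ with $V^\T J V=\hat J=\diag(-1,1)$ is a nice addition that the paper does not include, since it isolates the $J$-unitarity restriction as the precise culprit.
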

\begin{proof} Let us give a counterexample for the real case $A\in\R^{n\times m}$:
$$A_{1\times 2}=\left(
                  \begin{array}{cc}
                    0 & 1 \\
                  \end{array}
                \right),\qquad
                J=\diag(1, -1),\qquad n=1,\qquad m=2.
$$
We have $\rank (A)=1$ and $\rank (A J A^\T)=1$. Let us prove that there are no matrices $D$, $U$, and $V$ of the following form
\begin{eqnarray}
&&D=\left(
                  \begin{array}{cc}
                    d & 0 \\
                  \end{array}
                \right)\in\R^{1\times 2},\quad
U=\left(
                \begin{array}{c}
                  u \\
                \end{array}
              \right)\in\R^{1\times 1},\quad
U^\T U= 1,\nonumber\\
&&V=\left(
                                    \begin{array}{cc}
                                      v_{11} & v_{12} \\
                                      v_{21} & v_{22} \\
                                    \end{array}
                                  \right)\in\R^{2\times 2},\quad
V^\T J V= J\nonumber
\end{eqnarray}
such that $A=U D V^\T$.

The condition $V^\T J V=J$ is equivalent to
$$v_{11}^2=1+v_{21}^2,\qquad v_{12}^2=1+v_{22}^2,\qquad v_{11}v_{12}=v_{21}v_{22}.$$
We obtain
$$\left(
    \begin{array}{cc}
      0 & 1 \\
    \end{array}
  \right)
=\left(
   \begin{array}{c}
     u \\
   \end{array}
 \right)
\left(
  \begin{array}{cc}
    d & 0 \\
  \end{array}
\right)
\left(
                \begin{array}{cc}
                  v_{11} & v_{21} \\
                  v_{12} & v_{22} \\
                \end{array}
              \right),
$$
i.e. $u d v_{11}=0$ and $u d v_{21}=1$.
Using $d\neq 0$ and $u\neq 0$, we get $v_{11}=0$, which is a contradiction to $v_{11}^2=1+v_{21}^2$. $\blacksquare$
\end{proof}

\noindent{\bf Remark 2.}\label{rem6}
If we add condition that $A_{n\times m}$ is a full column rank matrix (we have also $n \geq m$ and $j=0$ in this case) to the formulation of Theorem \ref{thZha}, then we can replace condition $V^\H J V= \hat{J}$ by $V^\H J V= J$ and the statement of the theorem will be correct. This particular case is usually considered in the literature (see, for example, \cite{Slapnicar, Novakovic, Politi}). In this section, we try to distinguish the general case and the particular cases for the convenience of the reader.

\bigskip

The counterexample above shows us that we must use the concept of hyperexchange matrices in the formulation of Theorem \ref{thZha}. However, in the next section, we give a generalization of Theorem \ref{thZha} without using the concept of hyperexchange matrices for the general case.

\section{A new formulation of the HSVD}
\label{sec:4}

\begin{theorem}\label{thNew} Assume $J=\diag(I_p, -I_q)$, $p+q=m$. For an arbitrary matrix $A\in\CC^{n\times m}$, there exist $U\in\UU(n)$ and $V\in\UU(p, q)$
such that
\begin{eqnarray}
A=U\Sigma V^\H,\label{new1}
\end{eqnarray}
where
\begin{eqnarray}
\Sigma=\lefteqn{\quad\underbrace{\phantom{
      \begin{array}{ccc}
        P_{l-t} & 0 & 0 \\
        0 & 0 & 0  \\
        0 & I_j & 0  \\
        0 & 0 & 0 \\
      \end{array}
}}_p\underbrace{\phantom{
      \begin{array}{ccc}
        Q_{t} & 0 & 0 \\
        0 & 0 & 0  \\
        0 & I_j & 0  \\
        0 & 0 & 0 \\
      \end{array}
}}_q\quad}
\left(
      \begin{array}{ccc|ccc}
        P_{l-t} & 0 & 0 & 0 & 0 & 0\\
        0 & 0 & 0 & Q_t & 0 & 0 \\
        0 & I_j & 0 & 0 & I_j & 0 \\
        0 & 0 & 0 & 0 & 0 & 0\\
      \end{array}
    \right)\in\R^{n\times m},\label{D}
\end{eqnarray}
where the first block has $p$ columns and the second block has $q$ columns, $P_{l-t}$ and $Q_{t}$ are diagonal matrices of corresponding dimensions $l-t$ and $t$ with all positive uniquely determined diagonal elements (up to a permutation).

Moreover, choosing $U$, one can swap rows of the matrix $\Sigma$. Choosing $V$, one can swap columns in individual blocks but not across blocks. Thus we can always arrange diagonal elements of the matrices $P_{l-t}$ and $Q_t$ in decreasing (or ascending) order\footnote{Alternatively, we can change the order of the first $l$ rows of the matrix $\Sigma$ and obtain all nonzero elements of the first $l$ rows of the matrix $\Sigma$ in decreasing (or ascending) order.}.

Here we have
$$j=\rank (A)-\rank (AJA^\H),\qquad l=\rank (AJA^\H),$$
and $t$ is the number of negative eigenvalues of the matrix $AJA^\H$ (note that $l-t$ is the number of positive eigenvalues of the matrix $AJA^\H$).
\end{theorem}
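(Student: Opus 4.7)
The plan is to derive Theorem \ref{thNew} from the standard formulation (Theorem \ref{thZha}) by converting the hyperexchange matrix into a $J$-unitary one via right multiplication by a real permutation matrix. First I apply Theorem \ref{thZha} to obtain $A = U_0 \Sigma_Z V_0^\H$ with $V_0^\H J V_0 = \hat J = \diag(-I_j, I_j, -I_t, I_{l-t}, I_s, -I_{k-s})$. By the law of inertia (and as observed in Remark 1), $\hat J$ and $J$ share the same number of $+1$'s ($=p$) and $-1$'s ($=q$) on the diagonal, so there exists a permutation matrix $\Pi \in \R^{m \times m}$ with $\Pi^\T \hat J \Pi = J$: take $\Pi$ to send the indices of the $+1$-entries of $\hat J$ to positions $1, \dots, p$ and the indices of $-1$-entries to positions $p+1, \dots, m$. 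Setting $V := V_0 \Pi$ yields $V^\H J V = \Pi^\T V_0^\H J V_0 \Pi = \Pi^\T \hat J \Pi = J$, so $V \in \UU(p, q)$.

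Next, I rewrite $A = U_0 \Sigma_Z V_0^\H = U_0 (\Sigma_Z \Pi) V^\H$, using $V_0^\H = \Pi V^\H$ (valid since $\Pi^{-\H} = \Pi$ for a real permutation matrix). The right-multiplication by $\Pi$ splits Zha's central block $D_l$ into its "positive-signed" sub-diagonal $P_{l-t}$ (entries of $D_l$ aligned with $+1$'s of $\hat J$, now landing in the first $p$ columns) and its "negative-signed" sub-diagonal $Q_t$ (aligned with $-1$'s, landing in the last $q$ columns); similarly, one of the $I_j$ identity blocks of $\Sigma_Z$ goes to the first $p$ columns and the other to the last $q$. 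An additional row permutation $\rho$, absorbed by writing $U := U_0 \rho^\T \in \UU(n)$, rearranges the nonzero rows so that $\Sigma := \rho \Sigma_Z \Pi$ matches the block layout (\ref{D}); the column widths $l-t, j, s, t, j, k-s$ and row heights $l-t, t, j, n-l-j$ are consistent with the identities for $s$ and $k$ from Remark 1.

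For the uniqueness of $P_{l-t}, Q_t$ and the identification of $j, l, t$, note that $V \in \UU(p,q)$ gives
\begin{equation*}
A J A^\H = U \Sigma V^\H J V \Sigma^\H U^\H = U \Sigma J \Sigma^\H U^\H.
\end{equation*}
Writing $\Sigma = [\Sigma_+ \mid \Sigma_-]$ with $\Sigma_\pm$ its first $p$ and last $q$ columns, a direct block computation gives $\Sigma J \Sigma^\H = \Sigma_+ \Sigma_+^\H - \Sigma_- \Sigma_-^\H = \diag(P_{l-t}^2, -Q_t^2, 0, 0)$. Since $U \in \UU(n)$, this is a unitary diagonalization of the Hermitian matrix $A J A^\H$: the diagonal entries of $P_{l-t}^2$ are precisely its positive eigenvalues and those of $-Q_t^2$ its negative eigenvalues, so both diagonals are uniquely determined up to permutation. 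The identities $l = \rank(AJA^\H)$, $t = $ number of negative eigenvalues of $A J A^\H$, and $j = \rank(A) - \rank(A J A^\H)$ follow. The allowed row/column swaps correspond to right-multiplying $U$ by a permutation matrix (still in $\UU(n)$) and $V$ by a block-diagonal permutation in $\UU(p) \times \UU(q) \subset \UU(p, q)$; a cross-block column swap would violate $V^\H J V = J$. The main obstacle is purely combinatorial bookkeeping: tracking the six-block column permutation $\Pi$ and the matching row permutation $\rho$ so that the output hits the exact form (\ref{D}); the reasoning is routine but needs care with indices.
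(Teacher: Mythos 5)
Your proof is correct and follows essentially the same route as the paper: apply Zha's theorem, use the law of inertia to find a permutation matrix $\Pi$ (the paper calls it $S$) with $\Pi^\T \hat J \Pi = J$, set $V = V_0\Pi$ so that $V$ is $J$-unitary, and absorb the resulting column permutation of $\Sigma_Z$ together with an additional row permutation into the claimed block form. The only cosmetic difference is that the paper writes out $S$ and $\Sigma S$ explicitly while you describe them verbally, and you add a short independent check of uniqueness via the unitary diagonalization $AJA^\H = U(\Sigma J\Sigma^\H)U^\H$, which the paper leaves implicit.
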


\begin{proof} Let us use the statement of Theorem \ref{thZha} with hyperexchange matrix $V$ satisfying
\begin{eqnarray}
V^\H J V=\hat{J}=\diag(-I_j, I_j, -I_t, I_{l-t}, I_s, -I_{k-s}).\label{hy}
\end{eqnarray}
It is not difficult to show that hyperexchange matrices and $J$-unitary matrices are closely connected: for an arbitrary hyperexchange matrix $V$, there exists a permutation matrix $S^\T=S^{-1}$ such that $F:=VS$ is $J$-unitary.

From the law of inertia, it follows that matrices $J$ and $\hat{J}$ have the same numbers of $1$ and $-1$ on the diagonal. It means that these two matrices are connected with the aid of some permutation matrix $S$: $\hat{J}=S J S^\T$. Let us remind the reader that a permutation matrix has exactly one nonzero element, equal to $1$, in each column and in each row. A permutation matrix is orthogonal $S^\T S=I$. We get $SJ S^\T=V^\H J V$, i.e. $(VS)^\H J (VS)=J$ and $F=VS$ is $J$-unitary.

From $A=U\Sigma V^\H$ (\ref{hsvd}), we obtain $A=U\Sigma S F^\H$. Multiplying the matrix $\Sigma$ on the right by $S$, we change the order of its columns. Using $S^\T \hat{J} S=J$ and the explicit form of the matrix $\hat{J}$ (\ref{hy}), we get the explicit form of the matrix $S$:
$$S=\lefteqn{\quad\underbrace{\phantom{
      \begin{array}{ccc}
        0 & 0 & 0 \\
        0 & I_j & 0 \\
        0 & 0 & 0  \\
        I_{l-t} & 0 & 0  \\
        0 & 0 & I_s  \\
        0 & 0 & 0 \\
      \end{array}
}}_p\underbrace{\phantom{
      \begin{array}{ccc}
         0 & I_j & 0 \\
         0 & 0 & 0 \\
        I_t & 0 & 0 \\
         0 & 0 & 0 \\
        0 & 0 & 0 \\
         0 & 0 & I_{k-s} \\
      \end{array}
}}_q\quad}\left(
      \begin{array}{ccc|ccc}
        0 & 0 & 0 & 0 & I_j & 0 \\
        0 & I_j & 0 & 0 & 0 & 0 \\
        0 & 0 & 0 & I_t & 0 & 0 \\
        I_{l-t} & 0 & 0 & 0 & 0 & 0 \\
        0 & 0 & I_s & 0 & 0 & 0 \\
        0 & 0 & 0 & 0 & 0 & I_{k-s} \\
      \end{array}
    \right)
.$$
Then we calculate the explicit form of the matrix $\Sigma S$, where $\Sigma$ is from (\ref{hsvd}):
\begin{eqnarray}\Sigma S=\lefteqn{\quad\underbrace{\phantom{
       \begin{array}{ccc}
               0 & I_j & 0\\
               0 & 0 & 0 \\
               P_{l-t} & 0 & 0 \\
               0 & 0 & 0\\
             \end{array}
}}_p\underbrace{\phantom{
      \begin{array}{ccc|ccc}
               0 & I_j & 0 \\
               Q_t & 0 & 0 \\
                0 & 0 & 0 \\
                0 & 0 & 0\\
             \end{array}
}}_q\quad}\left(
             \begin{array}{ccc|ccc}
               0 & I_j & 0 & 0 & I_j & 0 \\
               0 & 0 & 0 & Q_t & 0 & 0 \\
               P_{l-t} & 0 & 0 & 0 & 0 & 0 \\
               0 & 0 & 0 & 0 & 0 & 0\\
             \end{array}
           \right)\label{TT}
,\end{eqnarray}
where $D_l=\diag(Q_t, P_{l-t})$.

We can multiply the matrix (\ref{TT}) by an arbitrary permutation matrix $S'$ on the left $S'\Sigma$ because $S'\in\OO(n)$. Thus we can swap rows of the matrix (\ref{TT}). We can multiply the matrix (\ref{TT}) on the right by an arbitrary permutation matrix of the form
$$\left(
      \begin{array}{c|c}
        S_1 & 0 \\ \hline
        0 & S_2  \\
      \end{array}
    \right)\in\OO(p,q),$$
where $S_1$ and $S_2$ are arbitrary permutation matrices of order $p$ and $q$ respectively. Thus we can swap columns in individual blocks but not across blocks. Finally, we obtain the explicit form of the new matrix $\Sigma$ (\ref{D}), where $P_{l-t}$ and $Q_{t}$ are diagonal matrices with all positive uniquely determined diagonal elements in decreasing (or ascending) order. $\blacksquare$
\end{proof}

Note that there are no indices $k$ and $s$ in the formulation of Theorem \ref{thNew} (but they are in the formulation of Theorem \ref{thZha}, see Remark 1). These indices do not have any important information on the HSVD.

Note that we can change $V^\H$ to $V$ in (\ref{new1}), because if $V\in\UU(p,q)$, then $V^\H\in\UU(p,q)$. Since analogous reasoning is not correct for hyperexchange matrices, we can not do the same in (\ref{hsvd}).

For the convenience of the reader, let us give a reformulation of Theorem \ref{thNew} to the case when a $J$-unitary matrix is on the left side and a unitary matrix is on the right side (as in \cite{Levi} but now without using hyperexchange matrices and for the general case).

\begin{theorem}\label{thNew2} Assume $J=\diag(I_p, -I_q)$, $p+q=m$. For an arbitrary matrix $B\in\CC^{m\times n}$, there exist
$U_0\in\UU(n)$ and $V_0\in\UU(p, q)$
such that
\begin{eqnarray}
V_0^\H B U_0=\Sigma,\label{new2}
\end{eqnarray}
where
\begin{eqnarray}
\Sigma=\left(
      \begin{array}{cccc}
        P_{l-t} & 0 & 0 & 0 \\
        0 & 0 & I_j & 0 \\
        0 & 0 & 0 & 0 \\
        \hline
        0 & Q_t & 0 & 0 \\
        0 & 0 & I_j & 0 \\
        0 & 0 & 0 & 0\\
      \end{array}
    \right)\!\!\!\!\!\!\!\!\!
    \begin{array}{c}
      \left.
      \begin{array}{c}
          \\
         \\
          \\
      \end{array}
    \right\}p \\
      \left.
      \begin{array}{c}
          \\
         \\
          \\
      \end{array}
    \right\}q
    \end{array}
    \in\R^{m\times n},\label{DD2}
\end{eqnarray}
where the first block has $p$ rows and the second block has $q$ rows, $P_{l-t}$ and $Q_t$ are diagonal matrices of corresponding dimensions $l-t$ and $t$ with all positive uniquely determined diagonal elements (up to a permutation).

Moreover, choosing $U_0$, one can swap columns of the matrix $\Sigma$. Choosing $V_0$, one can swap rows in individual blocks but not across blocks. Thus we can always arrange diagonal elements of the matrices $P_{l-t}$ and $Q_t$ in decreasing (or ascending) order\footnote{Alternatively, we can change the order of the first $l$ columns of the matrix $\Sigma$ and obtain all nonzero elements of the first $l$ columns of the matrix $\Sigma$ in decreasing (or ascending) order.}.

Here we have
$$j=\rank (B)-\rank (B^\H JB),\qquad l=\rank (B^\H JB),$$
and $t$ is the number of negative eigenvalues of the matrix $B^\H JB$ (note that $l-t$ is the number of positive eigenvalues of the matrix $B^\H JB$).
\end{theorem}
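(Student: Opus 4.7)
The plan is to derive Theorem \ref{thNew2} directly from Theorem \ref{thNew} applied to the Hermitian conjugate of $B$. Setting $A := B^\H \in \CC^{n\times m}$, Theorem \ref{thNew} produces $U \in \UU(n)$, $V \in \UU(p,q)$, and a matrix $\Sigma' \in \R^{n\times m}$ of the form (\ref{D}) such that $A = U\Sigma' V^\H$. Taking the Hermitian conjugate yields $B = A^\H = V (\Sigma')^\H U^\H$, and hence $V^{-1} B U = (\Sigma')^\H$.

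To put this in the required form (\ref{new2}), I would exploit the fact that $\UU(p,q)$ is a group closed under Hermitian conjugation: since $V \in \UU(p,q)$, the matrix $V_0 := V^{-\H}$ also lies in $\UU(p,q)$ (equivalently, $V_0 = JVJ$ using $V^\H J V = J$), and then $V_0^\H = V^{-1}$. Setting $U_0 := U \in \UU(n)$, we obtain $V_0^\H B U_0 = (\Sigma')^\H$, so it only remains to check that $(\Sigma')^\H$ coincides with the matrix $\Sigma$ displayed in (\ref{DD2}).

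This final step is purely bookkeeping on block transposition: $\Sigma'$ is real, so $(\Sigma')^\H = (\Sigma')^\T$; the block $P_{l-t}$ at position $(1,1)$ stays at $(1,1)$, the block $Q_t$ at $(2,4)$ moves to $(4,2)$, and the two copies of $I_j$ at $(3,2)$ and $(3,5)$ move to $(2,3)$ and $(5,3)$, reproducing (\ref{DD2}) exactly, with row-block sizes $(l-t,\,j,\,p-l+t-j,\,t,\,j,\,q-t-j)$ and column-block sizes $(l-t,\,t,\,j,\,n-l-j)$. The invariants $j$, $l$, $t$ are preserved because $\rank(A) = \rank(B)$ and $AJA^\H = B^\H J B$, and the permutation freedoms transfer through transposition: the freedom to swap rows of $\Sigma'$ via the choice of $U$ becomes the freedom to swap columns of $\Sigma$ via $U_0$, while the freedom to swap columns of $\Sigma'$ within each $J$-block via $V$ becomes the freedom to swap rows of $\Sigma$ within the corresponding block via $V_0$. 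The only genuinely algebraic step is the passage $V \mapsto V_0 = V^{-\H}$, and everything else is direct transcription, so I do not anticipate any substantive obstacle.
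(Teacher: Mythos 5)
Your argument is correct and is essentially the same as the paper's: you apply Theorem \ref{thNew} to $A=B^\H$, take conjugate transposes, and set $V_0:=V^{-\H}$, $U_0:=U$, just as the paper does (the paper writes this as ``$V_0^\H=V^{-1}\in\UU(p,q)$''). The only cosmetic difference is that you spell out why $V^{-\H}\in\UU(p,q)$ (via $V_0=JVJ$) and track the block transposition explicitly, steps the paper leaves implicit.
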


\begin{proof} Using $A=U\Sigma V^\H$ (\ref{hsvd}), we get $A^\H=V \Sigma^\T U^\H$. Multiplying both sides on the left by $V^{-1}$, and on the right by $U$, we get $V^{-1}A^\H U= \Sigma^\T$. Using the notation $B=A^\H$, $V_0^\H=V^{-1}\in\UU(p,q)$, $U_0=U\in\UU(n)$, we obtain the statement of the theorem. $\blacksquare$
\end{proof}

\section{Computing the HSVD}
\label{sec:5}

The new formulation of the HSVD allows us to compute the HSVD in the general case. In Theorem \ref{lem1}, we show that computing the HSVD is reduced to calculation of eigenvalues, eigenvectors, and generalized eigenvectors of some auxiliary matrices.

In this section, we use the formulation of the HSVD from Theorem \ref{thNew2}. For arbitrary matrix $B\in\CC^{m\times n}$, we can easily find matrices $V_0\in\UU(p,q)$, $U_0\in\UU(n)$, $\Sigma\in\R^{m\times n}$ of the form (\ref{DD2}) such that $V_0$, $B$, and $U_0$ satisfy (\ref{new2}).

\begin{theorem}\label{lem1} For the matrices $B$, $V_0$, $U_0$, and $\Sigma$ from Theorem \ref{thNew2}, we have the following equations:
\begin{eqnarray}
(B^\H J B)U_0=U_0(\Sigma^\T J \Sigma),\qquad (J B B^\H)V_0=V_0(J \Sigma \Sigma^\T).\label{calc}
\end{eqnarray}
The hyperbolic singular values of the matrix $B$ are square roots of the modules of the eigenvalues of the matrix $B^\H J B$. The columns of the matrix $U_0$ are corresponding eigenvectors of the matrix $B^\H J B$. The columns of the matrix $V_0$ are corresponding eigenvectors of the matrix $J B B^\H$ (in the case $j=0$), or corresponding eigenvectors and generalized eigenvectors of the matrix $J B B^\H$ (in the case $j\neq 0$).
\end{theorem}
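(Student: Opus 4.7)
The plan is to reduce the assertions about $B^\H J B$ and $J B B^\H$ to the analogous assertions for the simpler matrices $\Sigma^\T J \Sigma$ and $J\Sigma\Sigma^\T$ via the factorization $V_0^\H B U_0 = \Sigma$ provided by Theorem \ref{thNew2}, and then to read off the eigenvalues and Jordan structure directly from the explicit block form (\ref{DD2}) of $\Sigma$. The only delicate point will be isolating the $2j\times 2j$ nilpotent sub-block inside $J\Sigma\Sigma^\T$ that is responsible for the generalized eigenvectors when $j \neq 0$; everything else reduces to block bookkeeping.

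The two matrix identities in (\ref{calc}) follow by bare algebra. Rewriting Theorem \ref{thNew2} as $B = V_0^{-\H} \Sigma U_0^\H$, and using the fact that $\UU(p,q)$ is a group so that both $V_0^\H J V_0 = J$ and $V_0 J V_0^\H = J$ hold, I would invert the first of these to get $V_0^{-1} J V_0^{-\H} = J$, substitute into $B^\H J B = U_0 \Sigma^\T V_0^{-1} J V_0^{-\H} \Sigma U_0^\H$, and collapse the middle factor to $J$, yielding $B^\H J B = U_0 (\Sigma^\T J \Sigma) U_0^\H$; multiplication by $U_0$ on the right produces the first identity. For the second, $U_0 U_0^\H = I$ simplifies $B B^\H$ to $V_0^{-\H} \Sigma \Sigma^\T V_0^{-1}$, so $(J B B^\H) V_0 = J V_0^{-\H} \Sigma \Sigma^\T$, and rearranging $V_0 J V_0^\H = J$ as $J V_0^{-\H} = V_0 J$ converts this into $V_0 (J \Sigma \Sigma^\T)$.

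Next I would perform the block computation for $\Sigma^\T J \Sigma$. Partitioning $\Sigma$ row-wise as $\Sigma = \left(\begin{array}{c} X \\ Y \end{array}\right)$ according to $m = p + q$, one has $\Sigma^\T J \Sigma = X^\T X - Y^\T Y$. The two copies of $I_j$ appearing in $X$ and in $Y$ occupy the same $j$ columns of $\Sigma$, so their contributions cancel and the remainder is a diagonal $n\times n$ matrix whose nonzero entries are the diagonal entries of $P_{l-t}^2$ and of $-Q_t^2$. Since $\Sigma^\T J \Sigma$ is diagonal, the first equation in (\ref{calc}) directly says, column by column, that every column of $U_0$ is an eigenvector of $B^\H J B$ with eigenvalue equal to the matching diagonal entry; the hyperbolic singular values, namely the diagonal entries of $P_{l-t}$ and of $Q_t$, are then precisely the square roots of the moduli of these eigenvalues.

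The main work lies in $J \Sigma \Sigma^\T$, which is not diagonal. The same row-splitting produces $\Sigma \Sigma^\T$ with block-diagonal pieces $X X^\T$ in the top-left and $Y Y^\T$ in the bottom-right, plus off-diagonal blocks $X Y^\T$ and $Y X^\T$ that couple the two copies of $I_j$ sitting inside $X$ and $Y$. After multiplication by $J$, the $2j$ rows associated with these $I_j$'s assemble into the sub-block $\left(\begin{array}{cc} I_j & I_j \\ -I_j & -I_j \end{array}\right)$, whose square vanishes; it is therefore nilpotent of index $2$ and consists of $j$ Jordan blocks of size $2$ at eigenvalue $0$, while every other entry of $J\Sigma\Sigma^\T$ lies on the diagonal and equals a diagonal entry of $P_{l-t}^2$, a diagonal entry of $-Q_t^2$, or $0$. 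Reading the second equation of (\ref{calc}) column by column then identifies the columns of $V_0$ attached to the diagonal positions as honest eigenvectors of $J B B^\H$; when $j \neq 0$, the $2j$ columns attached to the nilpotent sub-block split into $j$ eigenvectors and $j$ generalized eigenvectors for the eigenvalue $0$, whereas if $j = 0$ this sub-block is absent and every column of $V_0$ is an eigenvector.
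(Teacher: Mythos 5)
Your overall strategy coincides with the paper's: derive the two identities in (\ref{calc}) by algebraic rearrangement of $V_0^\H B U_0 = \Sigma$ using the $J$-unitarity relations, then compute $\Sigma^\T J\Sigma$ and $J\Sigma\Sigma^\T$ explicitly from the block form (\ref{DD2}) and read off the eigenstructure. The algebra and the identification of $\Sigma^\T J\Sigma$ as diagonal, and of $J\Sigma\Sigma^\T$ as diagonal apart from the $2j\times 2j$ nilpotent coupling $\left(\begin{smallmatrix} I_j & I_j \\ -I_j & -I_j\end{smallmatrix}\right)$, are all correct and agree with the paper.

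However, your final sentence misdescribes the Jordan bookkeeping. You assert that the $2j$ columns of $V_0$ attached to the nilpotent sub-block ``split into $j$ eigenvectors and $j$ generalized eigenvectors.'' That is what a Jordan basis of that sub-block would look like, but the columns of $V_0$ are not such a basis. Reading the second equation of (\ref{calc}) column by column, if $v_a$ and $v_b$ denote the two columns of $V_0$ at the matching $I_j$-positions in the top ($p$) block and bottom ($q$) block respectively, one gets
\begin{equation*}
(J B B^\H)\,v_a = v_a - v_b, \qquad (J B B^\H)\,v_b = v_a - v_b,
\end{equation*}
so neither $v_a$ nor $v_b$ is annihilated by $J B B^\H$ (generically $v_a\neq v_b$); only the combination $v_a-v_b$ is a genuine eigenvector, and that vector is not itself a column of $V_0$. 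Both $v_a$ and $v_b$ satisfy $(J B B^\H)^2 v_a = (J B B^\H)^2 v_b = 0$, so all $2j$ of these columns are generalized eigenvectors of rank $2$ for the eigenvalue $0$, with none being an ordinary eigenvector. This is exactly the distinction the paper makes explicit by exhibiting $(JBB^\H)(v_i-w_i)=0$ and $(JBB^\H)^2 v_i = (JBB^\H)^2 w_i = 0$. The rest of your argument stands as written.
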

\begin{proof}
From (\ref{new2}), we obtain
\begin{eqnarray}
U_0^\H B^\H V_0=\Sigma^\T.\label{w1}
\end{eqnarray}
Multiplying on the left by $U_0$, and on the right by $J\Sigma$, we get
\begin{eqnarray}
B^\H V_0J \Sigma=U_0 \Sigma^\T J \Sigma.\label{w2}
\end{eqnarray}
Using (\ref{w2}) and (\ref{new2}), we obtain the first equation from (\ref{calc}).

Multiplying (\ref{new2}) on the left by $V_0J$, and on the right by $\Sigma^\T$, we get
\begin{eqnarray}
JBU_0\Sigma^\T=V_0J\Sigma \Sigma^\T.\label{w3}
\end{eqnarray}
Using (\ref{w3}) and (\ref{w1}), we obtain the second equation from (\ref{calc}).

If we denote
$$P_{l-t}=\diag(p_1, \ldots, p_{l-t}),\qquad Q_t=\diag(q_1, \ldots, q_t),$$
then it can be easily verified that
$$\Sigma^\T J \Sigma=\diag(P^2_{l-t}, -Q^2_t, 0)=\diag(p_1^2, \ldots, p_{l-t}^2, -q_1^2, \ldots, -q_{t}^2, 0, \ldots, 0).$$
From this equation and the first equation (\ref{calc}), it follows that hyperbolic singular values of the matrix $B$ are square roots of the modules of the eigenvalues of the matrix $B^\H J B$. The columns of the matrix $U_0$ are eigenvectors of the matrix $B^\H J B$.

We have
$$J \Sigma \Sigma^\T= \lefteqn{\quad\underbrace{\phantom{
       \begin{array}{ccc}
                          P^2_{l-t} & 0 & 0\\
                          0 & I_j & 0  \\
                          0 & 0 & 0   \\ \hline
                          0 & 0 & 0  \\
                          0 & -I_j & 0  \\
                          0 & 0 & 0  \\
                        \end{array}
}}_p\underbrace{\phantom{
      \begin{array}{ccc}
                          P^2_{l-t} & 0 & 0  \\
                          0 & I_j & 0  \\
                          0 & 0 & 0  \\ \hline
                          0 & 0 & 0  \\
                          0 & -I_j & 0  \\
                          0 & 0 & 0 \\
                        \end{array}
}}_q\quad}\left(
                        \begin{array}{ccc|ccc}
                          P^2_{l-t} & 0 & 0 & 0 & 0 & 0 \\
                          0 & I_j & 0 & 0 & I_j & 0 \\
                          0 & 0 & 0 & 0 & 0 & 0  \\ \hline
                          0 & 0 & 0 & -Q^2_t & 0 & 0 \\
                          0 & -I_j & 0 & 0 & -I_j & 0 \\
                          0 & 0 & 0 & 0 & 0 & 0 \\
                        \end{array}
                      \right)\!\!\!\!\!\!\!\!\!
    \begin{array}{c}
      \left.
      \begin{array}{c}
          \\
         \\
          \\
      \end{array}
    \right\}p \\
      \left.
      \begin{array}{c}
          \\
         \\
          \\
      \end{array}
    \right\}q
    \end{array}.
$$
Using this equation and the second equation (\ref{calc}), we can find the matrix $V_0$. In the case $j=0$, the columns of the matrix $V_0$ are eigenvectors of the matrix $J B B^\H$. In the case $j\neq 0$, the columns of the matrix $V_0$, which correspond to the blocks $P^2$, $Q^2$, and zero blocks, are eigenvectors of the matrix $JBB^\H$. The remaining columns $v_i$, $w_i$, $i=1, \ldots, j$ of the matrix $V_0$, which correspond to the two blocks $I_j$, satisfy the conditions
$$(J B B^\H)v_i=(J B B^\H)w_i=v_i-w_i,\qquad i=1, \ldots, j,$$
and therefore
$$(JBB^\H)(v_i-w_i)=0,\qquad (JBB^\H)^2 v_i=(JBB^\H)^2 w_i=0,\qquad i=1, \ldots , j,$$
i.e. $v_i$, $w_i$, $i=1, \ldots, j$ are generalized eigenvectors of the matrix $J B B^\H$. $\blacksquare$
\end{proof}

\noindent{\bf Example 1.} ($j=0$) Let us consider the following example
$$B=\left(
     \begin{array}{c}
       1 \\
       2 \\
     \end{array}
   \right),\qquad J=\diag(1, -1),\qquad m=2,\qquad p=q=1,\qquad n=1.$$
In this case, we have
$$B^\T J B=-3,\qquad \rank (B)=\rank (B^\T JB)=1,\qquad j=0, \qquad l=1.$$
Since eigenvalue of the matrix $B^\T JB$ equals $-3$, it follows that $t=1$ and the hyperbolic singular value of the matrix $B$ is $\sqrt{3}$. We can choose the following matrix $U_0\in\OO(1)$, the matrix $\Sigma$ is determined uniquely:
$$\Sigma=\left(
     \begin{array}{c}
       0 \\
       \sqrt{3} \\
     \end{array}
   \right),\qquad U_0=\left(
                      \begin{array}{c}
                        1 \\
                      \end{array}
                    \right).
   $$
Using $(J B B^\T)V_0=V_0(J \Sigma \Sigma^\T)$, we get
$$
\left(
  \begin{array}{cc}
    1 & 2 \\
    -2 & -4 \\
  \end{array}
\right) V_0=V_0 \left(
              \begin{array}{cc}
                0 & 0 \\
                0 & -3 \\
              \end{array}
            \right).$$
Note that $0$ and $-3$ are eigenvalues of the matrix $JB B^\T$. Calculating eigenvectors of the matrix $J B B^\T$ and choosing correct multipliers (taking into account $V_0^\T J V_0=J$), we get
$$V_0=\left(
      \begin{array}{cc}
        \frac{-2}{\sqrt{3}} & \frac{-1}{\sqrt{3}} \\
        \frac{1}{\sqrt{3}} & \frac{2}{\sqrt{3}} \\
      \end{array}
    \right)\in\OO(1,1).
$$
Finally, we have
\begin{eqnarray}
V_0^\T B U_0=\Sigma,\qquad\qquad \left(
      \begin{array}{cc}
        \frac{-2}{\sqrt{3}} & \frac{-1}{\sqrt{3}} \\
        \frac{1}{\sqrt{3}} & \frac{2}{\sqrt{3}} \\
      \end{array}
    \right)^\T \left(
     \begin{array}{c}
       1 \\
       2 \\
     \end{array}
   \right) \left(
             \begin{array}{c}
               1 \\
             \end{array}
           \right)
   =\left(
     \begin{array}{c}
       0 \\
       \sqrt{3} \\
     \end{array}
   \right).\label{LR}
\end{eqnarray}
Note that the matrices $V_0$ and $U_0$ in (\ref{LR}) are not determined uniquely. For example, we can change the signs of these matrices at the same time.

\bigskip

\noindent{\bf Example 2.} ($j\neq 0$) Let us consider the following example
$$B=\left(
     \begin{array}{c}
       2 \\
       2 \\
     \end{array}
   \right),\qquad J=\diag(1, -1),\qquad m=2,\qquad p=q=1,\qquad n=1.$$
In this case, we have
$$B^\T J B=0,\qquad l=\rank (B^\T JB)=0,\qquad j=\rank (B)-\rank (B^\T JB)=1.$$
We have no hyperbolic singular values in this case. We can choose the following matrix $U_0\in\OO(1)$, the matrix $\Sigma$ is determined uniquely:
$$\Sigma=\left(
     \begin{array}{c}
       1 \\
       1 \\
     \end{array}
   \right),\qquad U_0=\left(
                      \begin{array}{c}
                        1 \\
                      \end{array}
                    \right).
   $$
We calculate the matrix $B \Sigma \Sigma^\T$ and its eigenvector $a_1$:
$$J \Sigma \Sigma^\T=\left(
             \begin{array}{cc}
               2 & 2 \\
               -2 & -2 \\
             \end{array}
           \right),\qquad a_1:=\left(
  \begin{array}{c}
    1 \\
    -1 \\
  \end{array}
\right).
$$
Calculating corresponding generalized eigenvectors $v_1$ and $w_1$
$$(J B B^\T)v_1=(JBB^\T)w_1=a_1$$
and choosing correct multipliers (taking into account $V_0^\T J V_0=J$), we get
$$v_1=\left(
  \begin{array}{c}
    \frac{5}{4} \\
    -\frac{3}{4} \\
  \end{array}
\right),\qquad w_1=\left(
  \begin{array}{c}
    -\frac{3}{4} \\
    \frac{5}{4} \\
  \end{array}
\right),\qquad V_0=\left(
      \begin{array}{cc}
        \frac{5}{4} & -\frac{3}{4} \\
        -\frac{3}{4} & \frac{5}{4} \\
      \end{array}
    \right)\in\OO(1,1).
$$
Finally, we have
\begin{eqnarray}
V_0^\T B U_0=\Sigma,\qquad\qquad \left(
      \begin{array}{cc}
        \frac{5}{4} & -\frac{3}{4} \\
        -\frac{3}{4} & \frac{5}{4} \\
      \end{array}
    \right)^\T \left(
     \begin{array}{c}
       2 \\
       2 \\
     \end{array}
   \right) \left(
             \begin{array}{c}
               1 \\
             \end{array}
           \right)
   =\left(
     \begin{array}{c}
       1 \\
       1 \\
     \end{array}
   \right).\nonumber
\end{eqnarray}
The matrices $V_0$ and $U_0$ are not determined uniquely.

\bigskip

\noindent{\bf Remark 3.}\label{rem11}
In the case $J=I$ ($p=m$, $q=0$), we obtain $j=0$, $t=0$, and the ordinary singular value decomposition \cite{svd0}, \cite{svd} as the particular case of Theorem \ref{thNew2} with $V_0\in\UU(m)$, $U_0\in\UU(n)$. In this case, the matrix $\Sigma$ is diagonal with all nonnegative diagonal elements. In this case, we obtain from (\ref{calc}) the well-known formulas
$$(B^\H B)U_0=U_0(\Sigma^\T \Sigma),\qquad (B B^\H)V_0=V_0(\Sigma \Sigma^\T)$$
for finding $\Sigma$, $U_0$, and $V_0$. In this case, singular values of the matrix $B$ are square roots of the eigenvalues of the positive-definite Hermitian matrices $B^\H B$ and $B B^\H$, the columns of the matrix $V_0$ are eigenvectors of the matrix $B B^\H$, and the columns of the matrix $U_0$ are eigenvectors of the matrix $B^\H B$.

\section{Conclusions}
\label{sec:6}

In this paper, we present a new formulation of the HSVD for an arbitrary complex (or real) matrix without hyperexchange matrices and redundant invariant parameters. We use only the concept of pseudo-unitary (or pseudo-orthogonal) matrices. The expressions (\ref{D}) and (\ref{DD2}) can be regarded as new useful canonical forms of an arbitrary complex (or real) matrix. We show that computing the HSVD is reduced to calculation of eigenvalues, eigenvectors and generalized eigenvectors of some auxiliary matrices.
The new formulation of the HSVD naturally includes the ordinary SVD.

In the new formulation, we have two diagonal matrices $P$ and $Q$ in (\ref{D}) instead of one diagonal matrix $D$ in (\ref{hsvd}). This fact has physical (or geometrical) meaning. The matrix $A$ may describe some tensor field, the matrices $U$ and $V$ may describe some (coordinate, gauge) transformations. The matrix $\Sigma$ describes the same tensor field, but in some new coordinate system and with a new gauge fixing. The blocks $P$ and $Q$ of the matrix $\Sigma$ describe the contributions of the tensor field to (using physical terminology for the case $p=1$ and $q=3$) ``time'' (the first $p$) and ``space'' (the last $q$) coordinates.  Such contributions depend on the number of positive $l-t$ and negative $t$ eigenvalues of the matrix $AJA^\H$ respectively. From the statement of Theorem \ref{thZha}, it is not clear why there are exactly two blocks $I_j$ in (\ref{hsvd}) in the degenerate case $j\neq 0$. From the new formulation (Theorem \ref{thNew} or \ref{thNew2}), we see the meaning of this fact: each of two blocks $I_j$ carries information about degeneration in each of two (``space'' and ``time'') blocks of the matrix $\Sigma$. We use results of this paper to generalize results on Yang-Mills equations in Euclidean space $\R^n$ \cite{YM} to the case of pseudo-Euclidean space $\R^{p,q}$ \cite{YM2} of an arbitrary dimension $p+q$. We expect further use of the HSVD in computer science \cite{B1}, \cite{Politi},  engineering \cite{N1}, image and signal processing \cite{B2}, \cite{Bojan}, and physics \cite{N2}.

\section*{Acknowledgments}

The author is grateful to Prof. N. Marchuk for fruitful discussions. This work is supported by the Russian Science Foundation (project 18-71-00010).




\begin{thebibliography}{00}


\bibitem{B1} A.W. Bojanczyk. An implicit Jacobi-like method for computing generalized hyperbolic SVD. Linear Algebra and its Applications. 2003; 358: 293--307.
\bibitem{Bojan2} A.W. Bojanczyk, R. Onn, A.O. Steinhardt. Existence of the hyperbolic singular value decomposition. Linear Algebra and its Applications. 1993; 185: 21--30.
\bibitem{B2} A.W. Bojanczyk, A.O. Steinhardt. A linear array for covariance differencing via hyperbolic SVD. Proceedings Volume 1152, Advances Algorithms and Architectures for Signal Processing IV. 1989.
\bibitem{svd0} G.E. Forsythe, M.A. Malcolm, C.B. Moler. Computer Methods for Mathematical Computations. Prentice Hall, Upper Saddle River. 1977.
\bibitem{svd} G. Golub, C.V. Loan. Matrix computations. Johns Hopkins University Press Baltimore, MD, USA. 1996; 723 pp.
\bibitem{Hall} F.J. Hall, M. Rozlo\v{z}n\'{\i}k. G-matrices, J-orthogonal matrices, and their sign patterns. Czechoslovak Mathematical Journal. 2016; 66(3): 653--670.
\bibitem{Hassi} S. Hassi. A Singular Value Decomposition of Matrices
in a Space with an Indefinite Scalar Product, Series A, I mathematica, dissertationes no. 79, Annales Academiae Scientiarum Fennicae, Helsinki, 1990.
\bibitem{Higham} N.J. Higham. J-orthogonal matrices: Properties and generation. SIAM Rev. 2003; 45(3): 504--519.
\bibitem{N1} M.V. Kulikova. Hyperbolic SVD-based Kalman filtering for Chandrasekhar recursion. IET Control Theory \& Applications. 2019; 13(10): 1525.
\bibitem{Levi} B.C. Levy. A note on the hyperbolic singular value decomposition. Linear Algebra and its Applications. 1998; 277: 135--142.
\bibitem{Novakovic} V. Novakovi\'{c}, S. Singer. A GPU-based hyperbolic SVD algorithm. BIT Numerical Mathematics. 2011; 51(4): 1009--1030.
\bibitem{Bojan0} R. Onn, A.O. Steinhardt, A.W. Bojanczyk. The hyperbolic singular value decomposition and applications. Proceedings of the 32nd Midwest Symposium on Circuits and Systems. 1989; 575--577.
\bibitem{Bojan} R. Onn, A.O. Steinhardt, A.W. Bojanczyk. The hyperbolic singular value decomposition and applications. IEEE Trans. Signal Proc. 1991; 39: 1575--1588.
\bibitem{Par} B. N. Parlett. A Bidiagonal Matrix Determines Its Hyperbolic SVD to Varied Relative Accuracy.  SIAM J. Matrix Anal. Appl. 2005; 26(4): 1022--1057.
\bibitem{Politi} T. Politi. A continuous approach for the computation of the hyperbolic singular value decomposition. Computational Science - ICCS 2004. Lecture Notes in Computer Science. Springer, Berlin, Heidelberg. 2004; 3039: 464--474.
\bibitem{Sego2} V. \v{S}ego. Two-sided hyperbolic SVD. Linear Algebra and its Applications. 2010; 433: 1265--1275.
\bibitem{Sego} V. \v{S}ego. Two-sided hyperbolic singular value decomposition. Dissertation. 2009; 130 pp. https://bib.irb.hr/datoteka/465088.drsc.proc.pdf
\bibitem{YM} D. Shirokov. On constant solutions of ${\rm SU}(2)$ Yang-Mills equations in Euclidean space ${\mathbb R}^n$. Journal of Nonlinear Mathematical Physics. 2020; 27(2): 199--218; arXiv:1804.04620
\bibitem{YM2} D. Shirokov. On constant solutions of ${\rm SU}(2)$ Yang-Mills equations in pseudo-Euclidean space ${\mathbb R}^{p,q}$; 49 pp., arXiv: 1912.04996.
\bibitem{N2} S. Singer, E. D. Napoli, V. Novakovi\'{c}, G. \v{C}aclovi\'{c}. The LAPW method with eigendecomposition based on the Hari--Zimmermann generalized hyperbolic SVD. 2019; arXiv:1907.08560
\bibitem{Slapnicar} I. Slapni\v{c}ar. Highly accurate symmetric eigenvalue decomposition and hyperbolic SVD. Linear Algebra and its Applications. 2003; 358: 387--424.
\bibitem{Zha} H. Zha. A note on the existence of the hyperbolic singular value decomposition. Linear Algebra and its Applications. 1996; 240: 199--205.
\end{thebibliography}


\end{document}